\documentclass[11pt,a4paper]{article}

\usepackage[utf8]{inputenc}
\usepackage[T1]{fontenc}
\usepackage{amsmath,amssymb,amsthm}
\usepackage{mathtools}
\usepackage{hyperref}
\hypersetup{colorlinks=true, linkcolor=black, citecolor=black, urlcolor=black}
\usepackage{geometry}
\theoremstyle{plain}
\newtheorem{theorem}{Theorem}[section]
\newtheorem{lemma}{Lemma}[section]
\newtheorem{corollary}{Corollary}[section]
\theoremstyle{remark}
\newtheorem{remark}{Remark}[section]

\newcommand{\HH}{\mathbb H}
\newcommand{\RR}{\mathbb R}
\newcommand{\V}{V}
\newcommand{\B}{\mathcal B}
\newcommand{\Bo}{\mathcal B_0}
\newcommand{\F}{\mathcal F}
\newcommand{\W}{\mathcal W}
\newcommand{\G}{\mathcal G}
\newcommand{\Br}{\mathfrak{Br}}
\newcommand{\Boo}{\mathcal B_{00}}
\newcommand{\CA}{\mathcal C_A}
\newcommand{\sS}{\sigma_S}
\newcommand{\sigb}{\sigma_b^S}
\newcommand{\sd}{\sigma_d^S}
\newcommand{\rS}{\rho_S}

\DeclareMathOperator{\ran}{ran}
\DeclareMathOperator{\asc}{asc}
\DeclareMathOperator{\dsc}{dsc}
\DeclareMathOperator{\ind}{ind}
\DeclareMathOperator{\Id}{I}
\DeclareMathOperator{\codim}{codim}

\newcommand{\Qq}[1]{Q_q(#1)}

\title{A note on the Browder S-spectrum\\
of a bounded quaternionic operator}
\author{
   Mohamed Ali Dbeibia\\[0.5em]
  \small University of Sfax, Route de Soukra Km 3.5, B.P.\ 1171, 3000 Sfax, Tunisia.\\
  \small \texttt{medalidbeibia@gmail.com}
}
\date{}

\begin{document}
\maketitle

\begin{abstract}
Let $A$ be a bounded operator on a separable right quaternionic Hilbert space and let $\CA$ be the set of compact operators commuting with $A$. We establish the formula
\[
\sigb(A)=\bigcap_{K\in\CA} \sS(A+K),
\]
a quaternionic analogue of the classical characterization of the complex Browder spectrum. The proof rests on a factorization lemma (Lemma~\ref{lem:fact}), a consequence of the Riesz--Schauder theorem and an ascent/descent argument, and on the characterization $\sigb(A)=\sS(A)\setminus\sd(A)$ in terms of S-eigenvalues of finite type, due to Arzini and Jaatit \cite{AJ26}, for which we give here an independent proof (Lemma~\ref{lem:A}). We also record two consequences of independent interest: the intersection defining $\sigb(A)$ may be restricted, without loss, to \emph{finite-rank} operators commuting with $A$ (Corollary~\ref{cor:finite}); and, by a separate argument resting on the ascent/descent stability results of \cite{KD24}, the stronger pointwise statement $\sigb(A+K)=\sigb(A)$ holds for a fixed $K\in\CA$ and arbitrary $A\in\B(\V)$ (Theorem~\ref{thm:pointwise}), known in the complex case.
\end{abstract}

\medskip
\noindent\textbf{Keywords:} Quaternionic operator; S-spectrum; Browder S-spectrum; Riesz idempotent; compact perturbation; ascent and descent.

\noindent\textbf{2020 Mathematics Subject Classification:} Primary 47A53; Secondary 47A10, 47A55, 47B07, 46S05.

\section{Introduction}

In the complex case, two classical results describe the parts of the spectrum of a bounded operator $T$ on a Hilbert space that are resistant to compact perturbations. The Weyl spectrum is the intersection of the spectra perturbed by an arbitrary compact operator, while the Browder spectrum $\sigma_b(T)$ is the intersection of the spectra perturbed by a \emph{commuting} compact operator:
\[
\sigma_b(T)=\bigcap\big\{\sigma(T+K)\ :\ K\ \text{compact},\ TK=KT\big\}.
\]
This formula comes with a stronger pointwise statement, due to Kaashoek and Lay \cite{KL72}; see also Grabiner \cite{Gra78}: finite ascent and descent are preserved under commuting Riesz perturbations, whence $\sigma_b(T+K)=\sigma_b(T)$ for every compact $K$ commuting with $T$. The same stability phenomenon extends, more generally, to the single-valued extension property under commuting Riesz perturbations, as shown by Aiena and M\"uller \cite{AM15}. We refer to the monographs of Aiena \cite{Ai04,Ai18} for a systematic treatment of Browder and Weyl theory, ascent and descent, and their interplay with local spectral theory in the classical (complex) setting.

It is worth recalling briefly where the notion of S-spectrum itself comes from. The original motivation was quantum-mechanical: Birkhoff and von Neumann \cite{BvN36} showed that quantum mechanics can equally well be formulated over the quaternions, without specifying which notion of spectrum a quaternionic linear operator should carry, and the question of the correct spectral notion remained open for decades, the naive left and right spectra being unsuited to a genuine spectral theory. It was only in 2006 that Colombo and Sabadini identified the S-spectrum, by purely hyperholomorphic methods, building on the theory of slice regular functions introduced shortly before by Gentili and Struppa \cite{GS07}. The corresponding S-functional calculus, a quaternionic counterpart of the Riesz--Dunford calculus, was first announced in \cite{CGSS07}, then developed for bounded operators in \cite{CS09,CGSS10}, and extended to $n$-tuples of not necessarily commuting operators in \cite{CSS08}; this body of work was later collected in the monograph \cite{CSS11}. The Hilbert space theory proper, including the quaternionic spectral theorem for normal operators formulated on the S-spectrum, was developed shortly afterwards by Alpay, Colombo and Kimsey \cite{ACK16}. We refer to the monograph of Colombo, Gantner and Kimsey \cite{CGK} for a systematic and comprehensive account of the whole theory, including the Fredholm theory used throughout the present note.

In the quaternionic setting, the relevant notion of spectrum is thus the S-spectrum, associated with the pseudo-resolvent $\Qq{A}=A^2-2\operatorname{Re}(q)A+|q|^2\Id$. Muraleetharan and Thirulogasanthar \cite{MT18,MT19} developed Fredholm theory and introduced the Weyl and Browder S-spectra in this setting. They \emph{define} the Weyl S-spectrum as the intersection $\bigcap_{K\in\Bo(\V)}\sS(A+K)$ (\cite[Def.~6.1]{MT19}) and identify it with the set of $q$ for which $\Qq{A}$ is not a Weyl operator (\cite[Thm~6.6]{MT19}). For the Browder S-spectrum, however, they proceed differently: they define it via the condition of being Browder on the pseudo-resolvent (\cite[Def.~7.12]{MT19}) and explicitly note, at the start of their Section~7 and again in the conclusion, that they do not have the Riesz idempotent on the whole of $\HH$, and leave the corresponding study to future work. Their Theorem~7.10 nonetheless gives a partial implication: if $B$ is Browder and $0\in\sS(B)$, then $0$ is an isolated point of $\sS(B)$ and $B^2$ is a Weyl operator.

The quaternionic Riesz idempotent, introduced by Colombo and collaborators \cite{CSS11,CGK} within the S-functional calculus, and the notion of S-eigenvalue of finite type, have since been developed by Baloudi, Belgacem and Jeribi \cite{BBJ22}, and then by Baloudi, Jeribi and Zmouli \cite{BJZ24}, which makes the intersection formula for $\sigb$ accessible. This is the subject of the present note.

Two recent works directly overlap with this subject and should be pointed out from the outset. Arzini and Jaatit examine isolated parts of the S-spectrum and their associated spectral projectors \cite{AJ25}, and then characterize Riesz points of the S-spectrum and establish a quaternionic analogue of Weyl's theorem \cite{AJ26}. Their set $\pi_0(T)$ of Riesz points coincides with our $\sd(A)$, and their Theorem~4.2 states that, for $q\in\sS(A)$, $q$ is a Riesz point if and only if $\Qq{A}$ is Fredholm with finite ascent and descent: this is exactly Lemma~\ref{lem:A} below. This lemma is therefore not a new result, and we attribute it to \cite{AJ26}; we nonetheless give here an independent, self-contained proof of it. In addition, Kumari and Dharmarha \cite{KD24} study quaternionic Browder operators: they show that the ascent and descent of an operator are stable under a compact perturbation commuting with it (\cite[Thm~3.9]{KD24}, the quaternionic counterpart of the ascent/descent stability theorem of Kaashoek and Lay \cite{KL72}), that a product of commuting Browder operators is Browder (\cite[Prop.~4.1]{KD24}), that the Browder property splits along an invariant direct-sum decomposition (\cite[Thm~4.4]{KD24}), that a spectral mapping theorem $\sigb(p(T))=p(\sigb(T))$ holds for self-adjoint $T$ and real $p$ (\cite[Thm~4.8]{KD24}), and that the inclusion $\sigb(T+A)\setminus\{0\}\subseteq[\sigb(T)\cup\sigb(A)]\setminus\{0\}$ holds for commuting $T,A$ (\cite[Thm~4.9]{KD24}).

The pointwise stability $\sigb(A+K)=\sigb(A)$ for arbitrary $A\in\B(\V)$ and fixed $K\in\CA$ follows from \cite[Thm~3.9]{KD24} by a symmetry argument: applying the theorem to $\Qq{A}$ with perturbation $K'$ gives one inclusion, and applying it to $\Qq{A+K}$ with perturbation $-K'$ gives the reverse, using the stability of the Fredholm property under compact perturbations. We record this in Theorem~\ref{thm:pointwise} below; \cite{KD24} itself states this spectral consequence only in the narrower \cite[Cor.~3.10]{KD24}, under the sufficient but not necessary extra hypothesis that $A$ already be Browder.

By contrast, none of \cite{AJ25,AJ26,KD24} contains the aggregation over the commutant ideal $\CA$ of the kind appearing in Theorem~\ref{thm:main}: \cite{AJ26} only involves the Weyl S-spectrum $\bigcap_{K\in\Bo(\V)}\sS(A+K)$, borrowed from \cite{MT19}; and \cite{KD24} is confined throughout to a single, fixed compact perturbation. Theorem~\ref{thm:main}, the factorization lemma underlying it, and their corollaries are therefore, to our knowledge, new.

The note is organized around a single statement, the \emph{factorization lemma} (Lemma~\ref{lem:fact}): if $K$ is compact and commutes with $A$, and if $\Qq{A+K}$ is invertible, then $\Qq{A}$ is Browder. The argument is the one, classical in the complex case, of the factorization $\Qq{A}=R(\Id-R^{-1}K')$ with $R=\Qq{A+K}$ invertible and $K'$ compact; what we verify is that it carries over unchanged to the quaternionic setting, the non-commutativity of the scalars never coming into play. Its contrapositive gives the hard inclusion of the theorem (Corollary~\ref{cor:stab}); applied to the Riesz projector $K=P_{[q]}$, it also gives the easy half of the characterization $\sigb(A)=\sS(A)\setminus\sd(A)$. It rests on the quaternionic analogue of the Riesz--Schauder theorem (Lemma~\ref{lem:RS}), for which we give a proof so that the note is self-contained; the corresponding ascent/descent stability result is given in \cite[Thm~3.9]{KD24}.

One last point of positioning deserves to be made explicit: the second half of Lemma~\ref{lem:A} goes back to Theorem~7.10 of \cite{MT19}. The decomposition $\ran(B^m)\oplus\ker(B^m)$ and the conclusion by disconnection of the S-spectrum are already present there; what we add is the transfer of the statement, formulated in \cite{MT19} at the point $0$ for the operator $B$, to the point $q$ for the operator $A$, together with an elementary localization argument (Lemma~\ref{lem:loc}) that replaces the appeal to the spectral mapping theorem with a Bézout identity in $\RR[x]$. The same pattern is found in \cite[Thm~3.15]{AJ26}.

This note is organized as follows. Section~\ref{sec:prelim} gathers the preliminaries and notation on the S-spectrum, ascent and descent, and the Browder property, together with the properties (R1)--(R8) taken from the literature and used throughout. Section~\ref{sec:fact} establishes the factorization lemma (Lemma~\ref{lem:fact}), by way of the auxiliary Lemmas~\ref{lem:invar} and~\ref{lem:RS}, and derives from it the one-sided stability statement of Corollary~\ref{cor:stab}. Section~\ref{sec:charac} is devoted to the characterization of the Browder S-spectrum: it proves $\sigb(A)=\sS(A)\setminus\sd(A)$ (Lemma~\ref{lem:A}, due to \cite{AJ26}, for which we give here an independent, self-contained proof), and, building on it, establishes the main theorem (Theorem~\ref{thm:main}), together with the pointwise stability theorem (Theorem~\ref{thm:pointwise}) and the finite-rank refinement (Corollary~\ref{cor:finite}). Section~\ref{sec:conclusion} concludes.

\section{Preliminaries and notation}\label{sec:prelim}

Let $\V$ be a nonzero separable right quaternionic Hilbert space, equipped with a left multiplication fixed by a Hilbert basis, so that $\B(\V)$ is a two-sided quaternionic Banach $C^*$-algebra (\cite[Thm~5.1]{MT19}). We denote by $\Bo(\V)$ the set of compact operators, by $\Boo(\V)$ the set of finite-rank operators, by $\F(\V)$ the set of Fredholm operators, by $\W(\V)=\{B\in\F(\V):\ind(B)=0\}$ the set of Weyl operators, and by $\G(\V)$ the set of invertible operators.

For $A\in\B(\V)$ and $q\in\HH$, we define the pseudo-resolvent
\[
\Qq{A}=A^2-2\operatorname{Re}(q)A+|q|^2\,\Id,
\]
and the S-spectrum $\sS(A)=\{q\in\HH:\Qq{A}\text{ not invertible}\}$, whose complement is the S-resolvent set $\rS(A)$. We denote by
\[
[q]=\{x\in\HH:\operatorname{Re}(x)=\operatorname{Re}(q),\ |x|=|q|\}
\]
the $2$-sphere of $q$; when $q\in\RR$, $[q]$ reduces to the singleton $\{q\}$. Finally $Q_q(x)=x^2-2\operatorname{Re}(q)x+|q|^2$ also denotes the corresponding real polynomial in $\RR[x]$, so that $\Qq{A}$ is precisely $Q_q$ evaluated at the operator $A$, that is, a real polynomial in $A$ with central coefficients; consequently, for each fixed $q$, $\Qq{A}$ commutes with every operator that commutes with $A$, a fact we invoke repeatedly below.

Let us recall the following definitions, taken from \cite{MT19}.
\begin{itemize}
\item The ascent $\asc(B)$ and descent $\dsc(B)$ of an operator $B\in\B(\V)$ are the smallest integers $n\ge0$ such that $\ker(B^{n+1})=\ker(B^n)$ and $\ran(B^{n+1})=\ran(B^n)$ respectively, with the convention $+\infty$ if they do not exist (\cite[Def.~7.2]{MT19}).
\item An operator $B$ is said to be \emph{Browder}, denoted $B\in\Br(\V)$, if it is Fredholm and $\asc(B)<\infty$ and $\dsc(B)<\infty$ (\cite[Def.~7.5]{MT19}):
\[
\Br(\V)=\{B\in\F(\V):\asc(B)<\infty\ \text{and}\ \dsc(B)<\infty\}.
\]
By \cite[Lemma~7.3(b)]{MT19} we then have $\asc(B)=\dsc(B)$. The Fredholm condition does not follow from the other two: the zero operator on an infinite-dimensional space satisfies $\asc=\dsc=1$ without being Fredholm.
\item The Browder S-spectrum is $\sigb(A)=\{q\in\HH:\Qq{A}\notin\Br(\V)\}$ (\cite[Def.~7.12]{MT19}).
\item We denote by $\sd(A)$ the set of \emph{S-eigenvalues of finite type} of $A$: those $q\in\sS(A)$ such that $[q]$ is an isolated sphere of $\sS(A)$ whose associated Riesz projector $P_{[q]}$ has finite rank (\cite[Def.~3.2, Thm~3.5]{BJZ24}, \cite[Thm~3.10]{BBJ22}).
\end{itemize}

We shall use the following results, all established in the cited literature.

\medskip
\noindent\textbf{(R1)} \cite[Lemma~7.7]{MT19}: if $\asc(B)=\dsc(B)=m<\infty$, then $\V=\ran(B^m)\oplus\ker(B^m)$ (algebraic direct sum, $B$-invariant subspaces).

\medskip
\noindent\textbf{(R2)} \cite[Prop.~3.6, 4.2, Thm~4.5, 4.9, 4.11, Cor.~4.13]{MT19}: every finite-rank operator is compact; every Fredholm operator has closed range and finite-dimensional kernel; if $A$ is bijective and $K$ is compact, $A+K$ is Fredholm; a product of Fredholm operators is Fredholm, with additive index; the index is stable under compact perturbation; if $A\in\F(\V)$, then $A^n\in\F(\V)$ and $\ind(A^n)=n\ind(A)$.

\medskip
\noindent\textbf{(R3)} \cite[Rem.~7.9]{MT19}: $\G(\V)\subset\Br(\V)\subset\W(\V)\subset\F(\V)$.

\medskip
\noindent\textbf{(R4)} \cite[Prop.~3.16]{MT19}: for every $A\in\B(\V)$ acting on a nonzero space, $\sS(A)$ is a nonempty compact set; on the zero space, $\sS(A)=\emptyset$. In all cases $\sS(A)$ is closed and axially symmetric, that is, a union of spheres $[q]$: indeed $\Qq{A}$ depends only on $\operatorname{Re}(q)$ and $|q|$, hence only on $[q]$. These last two properties require no assumption on the dimension and hold trivially on the zero space.

\medskip
\noindent\textbf{(R5)} \cite[Thm~2.12]{BJZ24}, after \cite[Thm~4.1.5]{CGK}: if $[q]$ is an isolated sphere of $\sS(A)$, the associated Riesz projector $P_{[q]}$ is a bounded projector commuting with $A$; in particular $\ran(P_{[q]})$ and $\ker(P_{[q]})$ are closed and $A$-invariant.

\medskip
\noindent\textbf{(R6)} \cite[Thm~3.3]{BJZ24}, after \cite[Thm~4.4]{ACS14}: for every bounded projector $P\in\B(\V)$ commuting with $A$,
\[
\sS(A)=\sS(A|_{\ran P})\cup\sS(A|_{\ker P}).
\]
This union is not disjoint in general: for $A=\Id$ and arbitrary $P$, both terms equal $\{1\}$. Disjointness will need to be established separately at each use.

\medskip
\noindent\textbf{(R7)} \cite[Prop.~3.7]{BJZ24}: if $[q]$ is an isolated sphere of $\sS(A)$, with Riesz projector $P_{[q]}$ (see \cite{CGK,CSS11} for more details on its construction), then
\[
\Qq{A}+\big[2A+(1-2\operatorname{Re}(q))\Id\big]P_{[q]}
\]
is invertible.

\medskip
\noindent\textbf{(R8)} \cite[Prop.~3.8]{BJZ24}: if $[q]$ is an isolated sphere of $\sS(A)$, then $q\in\sd(A)$ if and only if $\Qq{A}\in\W(\V)$.

\begin{remark}[degenerate sphere case]\label{rem:reel}
Statements (R5), (R7) and (R8) are formulated in \cite{BJZ24} for isolated $2$-spheres, the motivation there being the case $q\in\sS(A)\setminus\RR$. They also hold when $q\in\RR$, where $[q]=\{q\}$, each for its own reason. For (R5), Theorem~4.1.5 of \cite{CGK} only assumes the decomposition $\sS(A)=\sigma_1\cup\sigma_2$ with $\operatorname{dist}(\sigma_1,\sigma_2)>0$ and $\sigma_1$ axially symmetric, which an isolated real singleton satisfies. For (R7), the proof of \cite[Prop.~3.7]{BJZ24} uses, besides the isolation of $[q]$, only the fact that the function $p\mapsto 1-2\operatorname{Re}(q)+2p$ does not vanish on $[q]$; for real $q$ it is there constantly equal to $1$. For (R8), the proof of \cite[Prop.~3.8]{BJZ24} follows from (R7) and an argument in the quaternionic Calkin algebra, indifferent to whether $q$ is real or not.
\end{remark}

Finally, the following elementary identity will be used twice.

\begin{remark}[perturbation identity]\label{rem:pert}
Let $A,K\in\B(\V)$ with $AK=KA$. Then $(A+K)^2=A^2+2AK+K^2$, whence, for every $q\in\HH$,
\begin{equation}\label{eq:pert}
\Qq{A+K}=\Qq{A}+2AK+K^2-2\operatorname{Re}(q)K .
\end{equation}
If moreover $K$ is a projector, $K^2=K$ and the correction term simplifies to $\big[2A+(1-2\operatorname{Re}(q))\Id\big]K$.
\end{remark}

\section{The factorization lemma}\label{sec:fact}

\begin{lemma}[invariance of ascent and descent]\label{lem:invar}
Let $X,R\in\B(\V)$ with $R$ invertible and $RX=XR$. Then, for every $n\ge0$,
\[
\ker\big((RX)^n\big)=\ker(X^n),\qquad \ran\big((RX)^n\big)=\ran(X^n).
\]
In particular $\asc(RX)=\asc(X)$ and $\dsc(RX)=\dsc(X)$.
\end{lemma}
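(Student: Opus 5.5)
The whole argument rests on the commutation hypothesis $RX=XR$, which lets us rewrite $(RX)^n$ with all the $R$'s in front. First we note that $R$ commutes with every power $X^k$, by induction on $k$. Hence $R^n$ commutes with $X^n$. Then, again by induction on $n$,
\[
(RX)^n=R^nX^n=X^nR^n .
\]
The inductive step is $(RX)^{n+1}=(RX)(RX)^n=RXR^nX^n=R^{n+1}X^{n+1}$, obtained by moving $X$ past $R^n$. The case $n=0$ is trivial, with both sides equal to $\Id$. No scalar multiplication enters anywhere: we only compose right-linear operators. So the non-commutativity of $\HH$ plays no role.

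Since $R$ is invertible, so is $R^n$, with inverse $(R^{-1})^n$. For the kernels we use the factorization $(RX)^n=R^nX^n$: the vector $x$ lies in $\ker((RX)^n)$ if and only if $R^nX^nx=0$. By injectivity of $R^n$, this holds if and only if $X^nx=0$. For the ranges we use the other factorization $(RX)^n=X^nR^n$. Surjectivity of $R^n$ gives
\[
\ran\big((RX)^n\big)=X^n\big(R^n\V\big)=X^n\V=\ran(X^n).
\]

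Since these equalities hold for every $n\ge0$, the chains $\ker((RX)^n)$ and $\ker(X^n)$ coincide term by term, and likewise for the ranges. The first index at which each chain stabilizes is therefore the same, including the case where neither stabilizes. This gives $\asc(RX)=\asc(X)$ and $\dsc(RX)=\dsc(X)$ directly from the definitions of \cite[Def.~7.2]{MT19}.

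There is no real obstacle here. The only point needing care is to use the two orderings $R^nX^n$ and $X^nR^n$ appropriately: injectivity of $R^n$ on the left gives the kernels, and surjectivity of $R^n$ on the right gives the ranges. Both orderings come from the commutation hypothesis.
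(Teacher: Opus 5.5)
Your proof is correct and follows essentially the same route as the paper: write $(RX)^n=R^nX^n$ using the commutation, get the kernels from injectivity of $R^n$ and the ranges from surjectivity of $R$, then read off ascent and descent from the termwise equality of the chains. The only cosmetic difference is in the ranges: you use the ordering $X^nR^n$ directly, whereas the paper keeps $R^nX^n$ and checks that $R^n$ maps $\ran(X^n)$ onto itself via $R(X^n\V)=X^n(R\V)=X^n\V$, which rests on the same commutation.
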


\begin{proof}
By commutativity, $(RX)^n=R^nX^n$. Since $R^n$ is injective, $\ker(R^nX^n)=\ker(X^n)$. Since $RX^n=X^nR$ and $R$ is surjective,
\[
R\big(\ran(X^n)\big)=R\big(X^n(\V)\big)=X^n\big(R(\V)\big)=X^n(\V)=\ran(X^n);
\]
iterating, $R^n(\ran(X^n))=\ran(X^n)$, whence $\ran(R^nX^n)=R^n(\ran(X^n))=\ran(X^n)$.
\end{proof}

\begin{lemma}[Riesz--Schauder]\label{lem:RS}
Let $B_1\in\Bo(\V)$. Then $\Id-B_1\in\Br(\V)$.
\end{lemma}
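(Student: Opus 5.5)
Set $T=\Id-B_1$. We must show that $T$ is Fredholm and that its ascent and descent are finite.

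\emph{Fredholm.} By (R2), $\Id$ is bijective and $-B_1$ is compact, so $T=\Id+(-B_1)$ is Fredholm. The index is stable under compact perturbation, so $\ind(T)=\ind(\Id)=0$. For every $n\ge1$ we have $T^n=\Id-C_n$, where $C_n=\sum_{k=1}^n\binom{n}{k}(-1)^{k+1}B_1^k$ is compact, being a finite real combination of powers of $B_1$. Hence each $T^n$ is Fredholm of index $0$, again by (R2). In particular $\ker(T^n)$ is finite-dimensional and $\ran(T^n)$ is closed.

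\emph{Finite ascent.} Suppose, for contradiction, that $\ker(T^n)\subsetneq\ker(T^{n+1})$ for all $n$. Since $\ker(T^n)$ is a closed (finite-dimensional) right quaternionic subspace, the orthogonal complement of $\ker(T^n)$ inside $\ker(T^{n+1})$ is nonzero. Choose a unit vector $x_n$ in it, so $\operatorname{dist}(x_n,\ker(T^n))=1$. This replaces the Riesz lemma; the Hilbert structure makes it trivial in the quaternionic setting.

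For $m<n$ we compute
\[
B_1x_n-B_1x_m=x_n-\big(Tx_n+x_m-Tx_m\big).
\]
Now $Tx_n\in\ker(T^{n-1})$, and $x_m,\,Tx_m\in\ker(T^{m+1})\subseteq\ker(T^n)$. So the bracket lies in $\ker(T^n)$, which gives $\|B_1x_n-B_1x_m\|\ge1$. Thus $(B_1x_n)$ has no convergent subsequence, contradicting compactness of $B_1$. Hence $\asc(T)<\infty$.

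\emph{Finite descent.} There are two options; I prefer the second, which avoids any quaternionic Riesz lemma.

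The first option is the symmetric argument. The ranges $\ran(T^n)$ are closed, and if the chain were strictly decreasing we would pick unit vectors $y_n\in\ran(T^n)\ominus\ran(T^{n+1})$. For $m>n$, the vector $B_1y_n-B_1y_m=y_n-(Ty_n+y_m-Ty_m)$ has bracket in $\ran(T^{n+1})$, so again $\|B_1y_n-B_1y_m\|\ge1$, a contradiction.

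The second option uses index zero. Let $p=\asc(T)$ and $N=\ker(T^p)$. Then $T$ restricts to a nilpotent map on the finite-dimensional space $N$. By the standard argument, $\ran(T^p)\cap\ker(T^p)=\{0\}$. Moreover
\[
\codim\ran(T^p)=\dim\ker(T^p)
\]
because $\ind(T^p)=0$. A closed subspace with trivial intersection with $N$ and codimension equal to $\dim N$ gives $\V=\ran(T^p)\oplus\ker(T^p)$. From this decomposition, $\ran(T^{p+1})=T^p(T\V)$ equals $\ran(T^p)$, using that $T$ is bijective on $\ran(T^p)$ (injective there since $\ran(T^p)\cap\ker T=\{0\}$, and of index zero). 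Hence $\dsc(T)\le p$.

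\emph{Main obstacle.} The delicate point is the descent step. The symmetric route needs the ranges to be closed and right-linear, which (R2) provides. The index route needs the codimension count, with dimensions understood over $\HH$ as in the Fredholm theory of \cite{MT19}. Throughout, one must check that the subspaces used are right quaternionic subspaces, so that orthogonal complements and distances behave as in the complex case.
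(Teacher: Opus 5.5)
Your proposal is correct and follows essentially the paper's route. You get Fredholm of index zero from (R2), finite ascent by choosing unit vectors in $\ker(T^{n+1})\ominus\ker(T^n)$ and contradicting the compactness of $B_1$, and finite descent from index zero. Your first descent option is the dual compactness argument, which the paper deliberately avoids. Your second option takes a detour through $\V=\ran(T^p)\oplus\ker(T^p)$ and asserts without proof that $T|_{\ran(T^p)}$ has index zero. You can skip both the decomposition and that claim with the paper's direct observation: $\ran(T^{p+1})\subseteq\ran(T^p)$ both have codimension $\dim\ker(T^p)$, hence coincide.

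There is one harmless slip in the ascent step. Since $x_n\in\ker(T^{n+1})$ but $x_n\notin\ker(T^n)$, you have $Tx_n\in\ker(T^n)$, never $Tx_n\in\ker(T^{n-1})$. The weaker, true statement still puts your bracket in $\ker(T^n)$, so the argument goes through.
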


\begin{proof}
\emph{Fredholm of index $0$.} By (R2) applied to $\Id$ bijective and $-B_1$ compact, $\Id-B_1\in\F(\V)$, and $\ind(\Id-B_1)=\ind(\Id)=0$ by stability of the index under compact perturbation. Again by (R2), $(\Id-B_1)^n\in\F(\V)$ with $\ind((\Id-B_1)^n)=n\cdot0=0$ for every $n\ge0$.

\emph{Finite ascent.} Suppose for contradiction that the chain $\ker((\Id-B_1)^n)$ is strictly increasing for every $n$. These are closed subspaces, being kernels of bounded operators, and finite-dimensional since $(\Id-B_1)^n\in\F(\V)$. For each $n\ge1$, choose $x_n$ of norm $1$ in $\ker((\Id-B_1)^{n+1})$ orthogonal to $\ker((\Id-B_1)^n)$; this is possible since the orthogonal complement of a proper closed subspace of a quaternionic Hilbert space is nonzero. For $m<n$, set
\[
y:=(\Id-B_1)x_n-(\Id-B_1)x_m+x_m.
\]
Then
\[
(\Id-B_1)^n y=(\Id-B_1)^{n+1}x_n-(\Id-B_1)^{n+1}x_m+(\Id-B_1)^n x_m=0:
\]
the first term vanishes because $x_n\in\ker((\Id-B_1)^{n+1})$, and the other two because $m+1\le n$ implies $x_m\in\ker((\Id-B_1)^{m+1})\subseteq\ker((\Id-B_1)^n)$. Hence $y\in\ker((\Id-B_1)^n)$ and, by construction, $x_n\perp y$. A direct computation gives
\[
x_n-y=x_n-(\Id-B_1)x_n+(\Id-B_1)x_m-x_m=B_1x_n-B_1x_m.
\]
The conjugate symmetry of the inner product gives $\langle x_n,y\rangle=\overline{\langle y,x_n\rangle}=0$, and the Pythagorean theorem applies:
\[
\|B_1x_n-B_1x_m\|^2=\|x_n-y\|^2=\|x_n\|^2+\|y\|^2\ge\|x_n\|^2=1.
\]
Thus $\|B_1x_n-B_1x_m\|\ge1$ for all $m<n$: the bounded sequence $(x_n)$ has an image $(B_1x_n)$ with no Cauchy subsequence, contradicting the compactness of $B_1$. The chain of kernels therefore stabilizes at a finite stage $m_0=\asc(\Id-B_1)$.

\emph{Finite descent.} Set $d:=\dim\ker((\Id-B_1)^{m_0})<\infty$; we proceed by a counting argument rather than by dualizing the previous one. For every $n$, $(\Id-B_1)^n$ is Fredholm of index $0$, so
\[
\codim\ran\big((\Id-B_1)^n\big)=\dim\ker\big((\Id-B_1)^n\big),
\]
a quantity equal to $d$ as soon as $n\ge m_0$, the chain of kernels being stationary. Since $\ran((\Id-B_1)^{n+1})\subseteq\ran((\Id-B_1)^n)$ and these two subspaces have the same finite codimension $d$, they coincide. Hence $\dsc(\Id-B_1)\le m_0<\infty$.

The operator $\Id-B_1$ is therefore Fredholm, with finite ascent and descent: $\Id-B_1\in\Br(\V)$.
\end{proof}

\begin{lemma}[factorization]\label{lem:fact}
Let $A\in\B(\V)$, $K\in\Bo(\V)$ with $AK=KA$, and $q\in\HH$. If $\Qq{A+K}$ is invertible, then $\Qq{A}\in\Br(\V)$.
\end{lemma}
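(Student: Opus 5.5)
The plan is to write $\Qq{A}$ as an invertible operator times an operator of the form $\Id-(\text{compact})$, and then combine Lemma~\ref{lem:RS} with Lemma~\ref{lem:invar}. Set $R:=\Qq{A+K}$, which is invertible by hypothesis. Since $AK=KA$, the perturbation identity \eqref{eq:pert} of Remark~\ref{rem:pert} gives
\[
\Qq{A}=R-K',\qquad K':=2AK+K^2-2\operatorname{Re}(q)K .
\]
Because $\Bo(\V)$ is a two-sided ideal of $\B(\V)$, $K'$ is compact. We may then factor
\[
\Qq{A}=R\big(\Id-R^{-1}K'\big),
\]
and $B_1:=R^{-1}K'$ is again compact.

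Lemma~\ref{lem:RS} then gives $X:=\Id-R^{-1}K'\in\Br(\V)$. To transfer this to $\Qq{A}=RX$, I will apply Lemma~\ref{lem:invar}, which requires $RX=XR$, that is, $R$ commuting with $R^{-1}K'$. This reduces to showing $RK'=K'R$.

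To check this commutation, note that $R=\Qq{A+K}$ is a real polynomial in $A+K$, and $K'$ is a polynomial in $A$ and $K$ with real coefficients. Since $A$ and $K$ commute, everything in the algebra they generate commutes, so $RK'=K'R$. Multiplying on both sides by $R^{-1}$ gives $R^{-1}K'=K'R^{-1}$, and hence $R$ commutes with $X$. I will also use that real scalars are central; this is the only point where the quaternionic nature of the scalars could intervene, and it does not, because $\operatorname{Re}(q)$ and $|q|^2$ are real.

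Finally, Lemma~\ref{lem:invar} gives $\asc(\Qq{A})=\asc(X)<\infty$ and $\dsc(\Qq{A})=\dsc(X)<\infty$. For the Fredholm property, $\Qq{A}=R-K'$ with $R$ bijective and $K'$ compact, so (R2) gives $\Qq{A}\in\F(\V)$. Alternatively, it is a product of the invertible, hence Fredholm, operator $R$ with the Fredholm operator $X$. Therefore $\Qq{A}\in\Br(\V)$.

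I do not expect a real obstacle here. The only step needing care is the commutation of $R$ with $R^{-1}K'$, which is exactly what allows Lemma~\ref{lem:invar} to apply.
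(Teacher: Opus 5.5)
Your proof is correct and follows the paper's argument essentially step by step. You use the same factorization $\Qq{A}=R(\Id-R^{-1}K')$ and the commutation of $R$ with $K'$; you justify that commutation via the commutativity of the real algebra generated by $A$ and $K$, whereas the paper checks the commutators explicitly. You then conclude with Lemma~\ref{lem:RS} and Lemma~\ref{lem:invar}, and your direct route to the Fredholm property, via (R2) applied to $R$ bijective and $-K'$ compact, is an equally valid alternative to the paper's product argument.
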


\begin{proof}
Set $B:=\Qq{A}$, $R:=\Qq{A+K}$, assumed invertible, and
\[
K':=2AK+K^2-2\operatorname{Re}(q)K,
\]
a compact operator since $K$ is. Identity~\eqref{eq:pert} of Remark~\ref{rem:pert} reads $R=B+K'$, i.e. $B=R-K'$.

The operator $K'$ commutes with $A$: indeed $[AK,A]=A(KA)-A(AK)=0$, $[K^2,A]=0$ and $[K,A]=0$. It also commutes with $K$: from $AK=KA$ we get $KAK=AK^2$, so $[K',K]=2(AK^2-KAK)=0$. Consequently $K'$ commutes with $A+K$, hence with $R$, which is a polynomial with real coefficients in $A+K$, hence also with $R^{-1}$.

Set $B_1:=R^{-1}K'$, a compact operator. Since $R$ commutes with $K'$ and with $R^{-1}$, it commutes with $B_1$, hence with $\Id-B_1$. Finally
\[
R(\Id-B_1)=R-RR^{-1}K'=R-K'=B.
\]
By Lemma~\ref{lem:invar}, applied to $X=\Id-B_1$ and $R$ invertible commuting with $X$,
\[
\asc(B)=\asc(\Id-B_1),\qquad \dsc(B)=\dsc(\Id-B_1),
\]
quantities that are finite by Lemma~\ref{lem:RS}. Moreover $R$, invertible, and $\Id-B_1$, Browder, are both Fredholm by (R3); by (R2), their product $B=R(\Id-B_1)$ is Fredholm. Thus $B\in\F(\V)$ with $\asc(B),\dsc(B)<\infty$, that is, $B\in\Br(\V)$.
\end{proof}

\begin{corollary}\label{cor:stab}
Let $A\in\B(\V)$ and $K\in\Bo(\V)$ with $AK=KA$. Then $\sigb(A)\subseteq\sS(A+K)$.
\end{corollary}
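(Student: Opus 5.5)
The plan is to argue by contraposition, which reduces Corollary~\ref{cor:stab} to a direct reading of Lemma~\ref{lem:fact}. Fix $q\in\HH$ with $q\notin\sS(A+K)$; we show $q\notin\sigb(A)$. By definition of the S-spectrum, $q\in\rS(A+K)$ means exactly that $\Qq{A+K}$ is invertible.

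The hypotheses of Lemma~\ref{lem:fact} are then all in place: $A\in\B(\V)$, $K\in\Bo(\V)$ with $AK=KA$, and $\Qq{A+K}\in\G(\V)$. The lemma yields $\Qq{A}\in\Br(\V)$. By the definition of the Browder S-spectrum (\cite[Def.~7.12]{MT19}), this is precisely $q\notin\sigb(A)$.

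Hence $\HH\setminus\sS(A+K)\subseteq\HH\setminus\sigb(A)$, and taking complements gives $\sigb(A)\subseteq\sS(A+K)$. There is no real obstacle here. All the substance, namely the compactness of the correction term $K'$, its commutation with $R=\Qq{A+K}$, and the Riesz--Schauder argument, has already been absorbed into Lemma~\ref{lem:fact}. The only thing to check is that the quantifier over $q$ is pointwise, so the lemma can be applied one $q$ at a time with the same $K$.

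Since $K$ is arbitrary in $\CA$, we will also record the immediate consequence $\sigb(A)\subseteq\bigcap_{K\in\CA}\sS(A+K)$. This is the "hard" inclusion of Theorem~\ref{thm:main} mentioned in the introduction.
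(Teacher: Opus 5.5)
Your proof is correct and matches the paper's argument: contraposition, reading $q\notin\sS(A+K)$ as invertibility of $\Qq{A+K}$, then applying Lemma~\ref{lem:fact} pointwise in $q$ to get $\Qq{A}\in\Br(\V)$. The consequence you record, intersecting over $K\in\CA$, is exactly how the paper uses the corollary in the first half of the proof of Theorem~\ref{thm:main}.
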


\begin{proof}
By contraposition: if $q\notin\sS(A+K)$, then $\Qq{A+K}$ is invertible, so $\Qq{A}\in\Br(\V)$ by Lemma~\ref{lem:fact}, that is, $q\notin\sigb(A)$.
\end{proof}

\section{Characterization of the Browder S-spectrum}\label{sec:charac}

\begin{lemma}[perturbation by the Riesz projector]\label{lem:proj}
Let $A\in\B(\V)$ and $q\in\sd(A)$. Then $P_{[q]}$ is compact and commutes with $A$, and $\Qq{A+P_{[q]}}$ is invertible; in particular $q\notin\sS(A+P_{[q]})$.
\end{lemma}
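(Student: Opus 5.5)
The proof assembles the earlier results; only one point needs care.

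\emph{Step 1: compactness and commutation.} Since $q\in\sd(A)$, by definition $[q]$ is an isolated sphere of $\sS(A)$. The Riesz projector $P_{[q]}$ has finite rank, so it is compact by (R2). By (R5), which also covers real $q$ by Remark~\ref{rem:reel}, $P_{[q]}$ is a bounded projector commuting with $A$.

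\emph{Step 2: invertibility.} Because $AP_{[q]}=P_{[q]}A$ and $P_{[q]}^2=P_{[q]}$, the projector case of Remark~\ref{rem:pert} gives
\[
\Qq{A+P_{[q]}}=\Qq{A}+\big[2A+(1-2\operatorname{Re}(q))\Id\big]P_{[q]} .
\]
By (R7), again valid for real $q$ by Remark~\ref{rem:reel}, the right-hand side is invertible. By the definition of the S-spectrum, this means exactly $q\notin\sS(A+P_{[q]})$.

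\emph{Main point to check.} The only thing to verify is that the identity in Remark~\ref{rem:pert} uses the same $q$ that appears in (R7). It does: the coefficient $1-2\operatorname{Re}(q)$ and the operator $\Qq{A}$ depend only on $\operatorname{Re}(q)$ and $|q|$, hence only on the sphere $[q]$. No genuinely new argument is needed. If (R7) were not available, the fallback would be to decompose along $\ran P_{[q]}\oplus\ker P_{[q]}$ and check invertibility on each piece using (R6). On $\ker P_{[q]}$, the perturbed operator equals $A$, and $\sS(A|_{\ker P_{[q]}})$ avoids $[q]$. On the finite-dimensional space $\ran P_{[q]}$, the operator becomes $A+\Id$, and one would need $[q]\cap\sS(A|_{\ran P_{[q]}}+\Id)=\emptyset$. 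Since (R7) is available, this route is unnecessary.
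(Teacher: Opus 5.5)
Your proof is correct and is essentially the paper's own argument: finite rank gives compactness via (R2), (R5) gives commutation, and the projector case of Remark~\ref{rem:pert} combined with (R7) gives invertibility, with Remark~\ref{rem:reel} covering real $q$. The extra check on $q$ and the fallback decomposition along $\ran P_{[q]}\oplus\ker P_{[q]}$ are harmless but not needed.
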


\begin{proof}
Since $q\in\sd(A)$, the sphere $[q]$ is isolated in $\sS(A)$ and $P_{[q]}$ has finite rank, hence is compact by (R2); it commutes with $A$ by (R5). Since $P_{[q]}$ is a projector, Remark~\ref{rem:pert} gives
\[
\Qq{A+P_{[q]}}=\Qq{A}+\big[2A+(1-2\operatorname{Re}(q))\Id\big]P_{[q]},
\]
an operator that is invertible by (R7) (and by Remark~\ref{rem:reel} when $q$ is real).
\end{proof}

\begin{lemma}[localization by nilpotence]\label{lem:loc}
Let $M$ be a nonzero right quaternionic Hilbert space, $T\in\B(M)$ and $q\in\HH$. If $Q_q(T)$ is nilpotent, then $\sS(T)=[q]$.
\end{lemma}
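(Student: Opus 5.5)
We prove the two inclusions $\sS(T)\subseteq[q]$ and $[q]\subseteq\sS(T)$ separately. Both rest on the Bézout identity in $\RR[x]$ mentioned in the introduction, together with the nonemptiness of the S-spectrum from (R4), which holds because $M\neq0$.

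\emph{Inclusion $\sS(T)\subseteq[q]$.} Let $p\in\HH\setminus[q]$. Then $Q_p$ and $Q_q$ are distinct monic real polynomials of degree $2$, since two such polynomials coincide exactly when $\operatorname{Re}(p)=\operatorname{Re}(q)$ and $|p|=|q|$, that is, when $[p]=[q]$.

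We first check that they are coprime in $\RR[x]$. If $Q_q$ is irreducible over $\RR$ (that is, $q\notin\RR$), any common factor would be $Q_q$ itself. Since $Q_p$ is monic of the same degree, this would force $Q_p=Q_q$, which is excluded. If $q\in\RR$, then $Q_q=(x-q)^2$. A common factor must then be $x-q$, so $Q_p(q)=0$. Since $Q_p$ is real with a real root, $p$ must be real, so $Q_p=(x-p)^2$. This forces $p=q$, again excluded.

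The same reasoning applies to every power: $Q_q^m$ and $Q_p$ are coprime for each $m\ge1$. Fix $m$ with $Q_q(T)^m=0$. Bézout's identity gives $U,V\in\RR[x]$ with $UQ_p+VQ_q^m=1$. Evaluating at $T$, which is legitimate because real polynomials in $T$ commute with one another, yields
\[
U(T)\,Q_p(T)=Q_p(T)\,U(T)=\Id .
\]
Hence $Q_p(T)$ is invertible and $p\notin\sS(T)$.

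\emph{Inclusion $[q]\subseteq\sS(T)$.} By (R4), $\sS(T)$ is nonempty and axially symmetric. By the first step it is contained in the single sphere $[q]$. Being a nonempty union of spheres contained in $[q]$, it must equal $[q]$.

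The only delicate point is the coprimality of $Q_p$ and $Q_q$ when $[p]\neq[q]$, in particular the real degenerate case, where $Q_q=(x-q)^2$ is reducible. Once that is settled, the argument is purely algebraic. It never uses the non-commutativity of the scalars, because every polynomial involved has real coefficients.
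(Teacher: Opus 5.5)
Your proof is correct, and its skeleton matches the paper's. Both prove coprimality of $Q_p$ and $Q_q$ in $\RR[x]$ when $[p]\neq[q]$, with the same discriminant argument for the degenerate real case. Both then evaluate a Bézout identity at $T$ and finish with axial symmetry. The routes differ at two points.

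For $\sS(T)\subseteq[q]$, the paper applies Bézout to $Q_q$ and $Q_s$ themselves. It obtains $v(T)Q_s(T)=\Id-u(T)N$ with $N=Q_q(T)$, and inverts this by a finite Neumann series, since $u(T)N$ is nilpotent. You apply Bézout to $Q_q^m$ and $Q_p$ instead, so the nilpotent term vanishes outright and $U(T)$ is directly a two-sided inverse of $Q_p(T)$. This is slightly shorter, at the cost of the standard remark that coprimality passes to powers.

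For $[q]\subseteq\sS(T)$, you combine the first inclusion with the nonemptiness of the S-spectrum from (R4). That nonemptiness is a genuinely analytic theorem, of Liouville type. The paper simply notes that $Q_q(T)=N$ is nilpotent on the nonzero space $M$, hence not invertible, so $q\in\sS(T)$; it then needs only axial symmetry.

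Both routes are valid. The paper's is more elementary for the second inclusion, requiring no nonemptiness result at all. Yours gives a cleaner, explicit inverse for the first.
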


\begin{proof}
\emph{Coprimality.} Let us first show that, for $s\in\HH$, the monic real polynomials $Q_q$ and $Q_s$ of degree $2$ are coprime in $\RR[x]$ as soon as $s\notin[q]$. Suppose they have a nonconstant common divisor $D$. If $\deg D=2$, then $D=Q_q=Q_s$ by monicity, whence $\operatorname{Re}(q)=\operatorname{Re}(s)$ and $|q|=|s|$, that is, $[q]=[s]$. If $\deg D=1$, say $D=x-a$ with $a\in\RR$, then $Q_q$ has the real root $a$; its discriminant $4\operatorname{Re}(q)^2-4|q|^2$, which is always $\le0$, must then be zero, which forces $q\in\RR$, $Q_q=(x-q)^2$ and $a=q$. The same reasoning applied to $Q_s$ gives $s\in\RR$ and $a=s$, whence $q=s$ and $[q]=[s]$. In both cases $s\in[q]$.

\emph{Inclusion $\sS(T)\subseteq[q]$.} Let $s\notin[q]$ and $u,v\in\RR[x]$ be such that $uQ_q+vQ_s=1$ (Bézout identity, legitimate by the above). Set $N:=Q_q(T)$, nilpotent, say $N^k=0$. Since $u(T)$ and $N$ are polynomials with real coefficients in $T$, they commute, so $(u(T)N)^k=u(T)^kN^k=0$: the operator $u(T)N$ is nilpotent. Evaluating the Bézout identity at $T$,
\[
v(T)\,Q_s(T)=\Id_M-u(T)N=:W,
\]
and $W$ is invertible, with inverse $\sum_{j=0}^{k-1}(u(T)N)^j$. Since $v(T)$ and $Q_s(T)$ commute, we also have $Q_s(T)v(T)=W$; hence $W^{-1}v(T)$ is a left inverse and $v(T)W^{-1}$ a right inverse of $Q_s(T)$, which is consequently invertible. Thus $s\notin\sS(T)$.

\emph{Inclusion $[q]\subseteq\sS(T)$.} The operator $Q_q(T)=N$ is nilpotent on a nonzero space, hence not invertible: $q\in\sS(T)$. By axial symmetry (R4), $[q]\subseteq\sS(T)$.
\end{proof}
The following statement is that of Theorem~4.2 of \cite{AJ26}, where it is formulated as the equivalence, for $q\in\sS(A)$, between ``$q$ is a Riesz point of $A$'' and ``$\Qq{A}$ is Fredholm with finite ascent and descent''. The proof below is independent.

\begin{lemma}\label{lem:A}
For every $A\in\B(\V)$, $\ \sigb(A)=\sS(A)\setminus\sd(A)$.
\end{lemma}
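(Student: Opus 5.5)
We prove the two inclusions $\sd(A)\cap\sigb(A)=\emptyset$ and $\sS(A)\setminus\sd(A)\subseteq\sigb(A)$. We also use the trivial inclusion $\sigb(A)\subseteq\sS(A)$, which holds because $\G(\V)\subset\Br(\V)$ by (R3).

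\emph{Easy half.} Let $q\in\sd(A)$. By Lemma~\ref{lem:proj}, $K=P_{[q]}$ is compact, commutes with $A$, and makes $\Qq{A+K}$ invertible. Lemma~\ref{lem:fact} then gives $\Qq{A}\in\Br(\V)$, so $q\notin\sigb(A)$. Hence $\sigb(A)\subseteq\sS(A)\setminus\sd(A)$.

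\emph{Hard half.} Let $q\in\sS(A)$ with $B:=\Qq{A}\in\Br(\V)$; we show $q\in\sd(A)$. Set $m=\asc(B)=\dsc(B)$. By (R1), $\V=\ran(B^m)\oplus\ker(B^m)$, and both summands are invariant under $A$, since $A$ commutes with $B$. The plan has four steps.

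\begin{enumerate}
\item \emph{The decomposition is topological and the kernel is finite-dimensional.} $\ker(B^m)$ is closed and finite-dimensional because $B^m$ is Fredholm (R2). $\ran(B^m)$ is closed by (R2). So the algebraic direct sum is topological, and the associated projector $P$ onto $\ker(B^m)$ along $\ran(B^m)$ is bounded by the closed graph theorem. $P$ commutes with $A$ because both summands are $A$-invariant. Also $\ker(B^m)\neq\{0\}$: otherwise $B^m$ would be injective and surjective, so $B$ would be invertible, contradicting $q\in\sS(A)$.
\item \emph{Spectrum on the kernel.} On $M:=\ker(B^m)$, the operator $Q_q(A|_M)=B|_M$ is nilpotent. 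Lemma~\ref{lem:loc} then gives $\sS(A|_M)=[q]$.
\item \emph{Spectrum on the range.} On $N:=\ran(B^m)$, the operator $B|_N$ is injective, since $\ker B\cap\ran B^m\subseteq\ker B^m\cap\ran B^m=\{0\}$. It is also surjective, since $B(\ran B^m)=\ran B^{m+1}=\ran B^m$. It is bounded on the closed space $N$, hence invertible. Therefore $[q]\cap\sS(A|_N)=\emptyset$, using axial symmetry (R4).
\item \emph{Conclusion.} By (R6), $\sS(A)=[q]\cup\sS(A|_N)$. Since $\sS(A|_N)$ is closed and disjoint from the compact set $[q]$, the sphere $[q]$ is isolated in $\sS(A)$. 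Finally, (R8) applies: $B$ is Weyl by (R3), so $q\in\sd(A)$.
\end{enumerate}

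The step needing the most care is the last one. It requires that $[q]$ be a genuinely isolated sphere, which comes from the disjoint decomposition, and it treats real $q$ through Remark~\ref{rem:reel}. Instead of (R8), one could identify $P_{[q]}$ with $P$ by uniqueness of the Riesz projector for a separated piece of the spectrum, which shows directly that $P_{[q]}$ has finite rank. Citing (R8) avoids that identification, so it is the route we take.
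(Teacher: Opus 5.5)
Your proposal is correct and follows essentially the same route as the paper: the easy half via Lemma~\ref{lem:proj} and Lemma~\ref{lem:fact}, and the hard half via the decomposition (R1), Lemma~\ref{lem:loc} on $\ker(B^m)$, invertibility of $B$ on $\ran(B^m)$, (R6) to isolate $[q]$, and finally (R8). The differences are cosmetic: you argue directly rather than by contradiction, and you obtain $\ker(B^m)\neq\{0\}$ from the non-invertibility of $B$ rather than from $m\ge1$.
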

\begin{proof}
\textbf{$\sigb(A)\subseteq\sS(A)\setminus\sd(A)$:}
Let us first check $\sigb(A)\subseteq\sS(A)$: if $q\in\rS(A)$, then $\Qq{A}$ is invertible, so $\Qq{A}\in\G(\V)\subset\Br(\V)$ by (R3) and $q\notin\sigb(A)$.

Now let $q\in\sd(A)$. By Lemma~\ref{lem:proj}, $K:=P_{[q]}$ is compact, commutes with $A$, and $\Qq{A+K}$ is invertible. Lemma~\ref{lem:fact} then gives $\Qq{A}\in\Br(\V)$, that is, $q\notin\sigb(A)$.

\medskip
\textbf{$\sS(A)\setminus\sd(A)\subseteq\sigb(A)$:}
Let $q\in\sS(A)\setminus\sd(A)$ and suppose for contradiction that $q\notin\sigb(A)$, that is, $B:=\Qq{A}\in\Br(\V)$. Then $\asc(B)=\dsc(B)=m<\infty$ (\cite[Lemma~7.3(b)]{MT19}). Since $q\in\sS(A)$, the operator $B$ is not invertible, so $m\ge1$; in particular $\ker B\ne\{0\}$, since otherwise $\asc(B)=0$, and hence $\ker(B^m)\supseteq\ker B\ne\{0\}$.

Since $B\in\Br(\V)\subset\F(\V)$ by (R3), (R2) gives $B^m\in\F(\V)$, so $\ran(B^m)$ is closed. By (R1), $\V=\ran(B^m)\oplus\ker(B^m)$, so these two subspaces are closed, and $A$-invariant since $B$ is a real polynomial in $A$, hence commutes with $A$.

\emph{On $\ker(B^m)$.} The operator $Q_q\big(A|_{\ker(B^m)}\big)=B|_{\ker(B^m)}$ is nilpotent, of exponent at most $m$, and $\ker(B^m)\ne\{0\}$; by Lemma~\ref{lem:loc},
\[
\sS\big(A|_{\ker(B^m)}\big)=[q].
\]

\emph{On $\ran(B^m)$.} The operator $B|_{\ran(B^m)}$ is injective: if $x\in\ran(B^m)$ and $Bx=0$, then $x\in\ker B\cap\ran(B^m)\subseteq\ker(B^m)\cap\ran(B^m)=\{0\}$. It is surjective since $\dsc(B)=m$ gives $B(\ran(B^m))=\ran(B^{m+1})=\ran(B^m)$. Since the subspace $\ran(B^m)$ is closed, hence complete, the open mapping theorem ensures that the inverse of this bijection is bounded: $Q_q(A|_{\ran(B^m)})=B|_{\ran(B^m)}$ is invertible in $\B(\ran(B^m))$, so $q\notin\sS(A|_{\ran(B^m)})$. By axial symmetry of this S-spectrum (R4),
\[
[q]\cap\sS\big(A|_{\ran(B^m)}\big)=\emptyset.
\]

\emph{Conclusion.} The projector $P$ of $\V$ onto $\ran(B^m)$ along $\ker(B^m)$ is bounded (the two subspaces are closed and complementary, so the closed graph theorem applies) and commutes with $A$, these subspaces being $A$-invariant. By (R6),
\[
\sS(A)=\sS\big(A|_{\ker(B^m)}\big)\cup\sS\big(A|_{\ran(B^m)}\big)=[q]\;\sqcup\;\sS\big(A|_{\ran(B^m)}\big),
\]
the union being disjoint by the previous paragraph. The complement of $[q]$ in $\sS(A)$ is therefore $\sS(A|_{\ran(B^m)})$, closed by (R4); the sphere $[q]$ is thus open and closed in $\sS(A)$, that is, isolated. This formulation covers the degenerate case $\ran(B^m)=\{0\}$, where $\sS(A|_{\ran(B^m)})=\emptyset$ and $\sS(A)=[q]$.

Finally $B=\Qq{A}\in\Br(\V)\subset\W(\V)$ by (R3). Since the sphere $[q]$ is isolated, (R8) gives $q\in\sd(A)$, contradicting the hypothesis. Hence $q\in\sigb(A)$.
\end{proof}

With Lemma~\ref{lem:A} and Lemma~\ref{lem:proj} in hand, together with Corollary~\ref{cor:stab} from Section~\ref{sec:fact}, we can now establish the main theorem of the note: the characterization of the Browder S-spectrum translates into an intersection formula over the set of compact operators commuting with $A$.

\begin{theorem}\label{thm:main}
For every $A\in\B(\V)$,
\[
\sigb(A)=\bigcap_{K\in\CA}\sS(A+K),\qquad \CA=\{K\in\Bo(\V):AK=KA\}.
\]
\end{theorem}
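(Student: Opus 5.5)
The proof is by double inclusion. For $\sigb(A)\subseteq\bigcap_{K\in\CA}\sS(A+K)$ there is nothing new to do. Corollary~\ref{cor:stab} gives $\sigb(A)\subseteq\sS(A+K)$ for each $K\in\CA$, and intersecting over $\CA$ yields the inclusion.

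For the reverse inclusion, take $q\notin\sigb(A)$; we must produce some $K\in\CA$ with $q\notin\sS(A+K)$. We split according to Lemma~\ref{lem:A}, which gives $\HH\setminus\sigb(A)=\rS(A)\cup\sd(A)$.

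If $q\in\rS(A)$, take $K=0$. It is trivially compact and commutes with $A$, and $\Qq{A+0}=\Qq{A}$ is invertible, so $q\notin\sS(A+0)$.

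If $q\in\sd(A)$, take $K=P_{[q]}$. Lemma~\ref{lem:proj} says exactly that this operator is compact, commutes with $A$, and makes $\Qq{A+P_{[q]}}$ invertible. Hence $q\notin\sS(A+P_{[q]})$.

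In both cases $q\notin\bigcap_{K\in\CA}\sS(A+K)$, which is the contrapositive of the desired inclusion. Note that a single witness $K$ per point suffices, since the intersection excludes $q$ as soon as one term does. The witnesses used are finite rank ($0$ or $P_{[q]}$), which also foreshadows Corollary~\ref{cor:finite}.

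The real difficulty has already been absorbed into Lemma~\ref{lem:A}. Its hard direction is that Browder points are either resolvent points or S-eigenvalues of finite type, which needs the ascent/descent decomposition and the localization Lemma~\ref{lem:loc}. Given that lemma, the only points to check here are bookkeeping. First, $0\in\CA$, so the $\rS(A)$ case is covered. Second, Lemma~\ref{lem:proj} applies also when $q$ is real, which Remark~\ref{rem:reel} ensures.
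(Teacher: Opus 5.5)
Your proof is correct and takes the same route as the paper. Corollary~\ref{cor:stab} gives the forward inclusion, and for the reverse inclusion Lemma~\ref{lem:A} splits the complement into resolvent points (witness $K=0$) and S-eigenvalues of finite type (witness $K=P_{[q]}$ via Lemma~\ref{lem:proj}); your remarks on finite-rank witnesses and the real case match Corollary~\ref{cor:finite} and Remark~\ref{rem:reel}.
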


\begin{proof}
 Corollary~\ref{cor:stab} gives $\sigb(A)\subseteq\sS(A+K)$ for each $K\in\CA$; intersecting over $K\in\CA$, we obtain $\sigb(A)\subseteq\bigcap_{K\in\CA}\sS(A+K)$. To prove the converse let $q\notin\sigb(A)$. By Lemma~\ref{lem:A}, either $q\in\rS(A)$, or $q\in\sd(A)$. In the first case, $K=0\in\CA$ works, since $q\notin\sS(A)$. In the second, Lemma~\ref{lem:proj} provides $K=P_{[q]}\in\CA$ with $q\notin\sS(A+P_{[q]})$. In both cases there exists $K\in\CA$ such that $q\notin\sS(A+K)$, so $q\notin\bigcap_{K\in\CA}\sS(A+K)$.
\end{proof}

The following theorem establishes the stability of the Browder S-spectrum under commuting compact perturbations for arbitrary bounded operators.

\begin{theorem}[pointwise stability]\label{thm:pointwise}
For every $A\in\B(\V)$ and every $K\in\CA$,
\[
\sigb(A+K)=\sigb(A).
\]
\end{theorem}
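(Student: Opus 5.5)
The plan follows the symmetry argument announced in the introduction: prove one inclusion for an arbitrary pair $(A,K)$ with $K\in\CA$, then apply it to the pair $(A+K,-K)$. Since $-K$ is compact and commutes with $A+K$, we have $-K\in\mathcal C_{A+K}$. Hence it suffices to show
\[
\rho\ :\ q\notin\sigb(A)\ \Longrightarrow\ q\notin\sigb(A+K)\qquad (K\in\CA).
\]
Applying $\rho$ to $(A+K,-K)$ then gives the converse inclusion $\sigb(A)\subseteq\sigb(A+K)$.

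To prove $\rho$, fix $q$ with $B:=\Qq{A}\in\Br(\V)$. By identity~\eqref{eq:pert} of Remark~\ref{rem:pert}, $\Qq{A+K}=B+K'$ with $K'=2AK+K^2-2\operatorname{Re}(q)K$. This $K'$ is compact, and as in the proof of Lemma~\ref{lem:fact} it commutes with $A$. Since $B$ is a real polynomial in $A$, $K'$ commutes with $B$. We then proceed in two steps.

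\emph{Fredholmness.} Since $B\in\F(\V)$ and $K'$ is compact, $B+K'$ is Fredholm with the same index by stability of the index under compact perturbation (R2).

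\emph{Finite ascent and descent.} We invoke \cite[Thm~3.9]{KD24}, the quaternionic Kaashoek--Lay theorem: if $B$ has finite ascent and descent and $K'$ is a compact operator commuting with $B$, then $B+K'$ also has finite ascent and descent. Together with the first step, this gives $\Qq{A+K}\in\Br(\V)$, i.e.\ $q\notin\sigb(A+K)$.

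The main obstacle is matching the hypotheses of \cite[Thm~3.9]{KD24} exactly. The plan assumes it is stated for a Browder (or finite ascent/descent) operator perturbed by a commuting compact, or Riesz, operator. If it instead requires the Fredholm property of the perturbed operator as an input, the first step supplies it. If it is formulated for $\Qq{\cdot}$ directly, the argument is even shorter.

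If one preferred to avoid \cite{KD24}, a fallback via Lemma~\ref{lem:A} is available. The case $q\in\rS(A)$ is then handled by Lemma~\ref{lem:fact} applied to $A+K$ with perturbation $-K$: invertibility of $\Qq{A}=\Qq{(A+K)+(-K)}$ gives $\Qq{A+K}\in\Br(\V)$. The case $q\in\sd(A)$ is harder. There one would perturb by $K-P_{[q]}$, which is compact and commutes with $A+K$, provided $P_{[q]}$ commutes with $K$. That holds because the Riesz projector is given by the S-functional calculus of $A$, which commutes with everything commuting with $A$. By Lemma~\ref{lem:proj}, $\Qq{A+P_{[q]}}$ is invertible, and Lemma~\ref{lem:fact} applied to $A+K$ then yields $\Qq{A+K}\in\Br(\V)$. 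I would present the \cite{KD24} route and keep this second argument as a self-contained alternative. Its only delicate point is the commutation $P_{[q]}K=KP_{[q]}$.
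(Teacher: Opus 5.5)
Your main route is the paper's proof. It uses the same $K'=2AK+K^2-2\operatorname{Re}(q)K$, Fredholmness from (R2), and finite ascent and descent from \cite[Thm~3.9]{KD24}, applied to the Browder operator $B$. Your swap $(A,K)\mapsto(A+K,-K)$ is exactly the paper's swap $(B,K')\mapsto(B+K',-K')$, because the correction term for the pair $(A+K,-K)$ is $-K'$.

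In your fallback, the perturbation of $A+K$ must be $P_{[q]}-K$, not $K-P_{[q]}$, so that $(A+K)+(P_{[q]}-K)=A+P_{[q]}$. With that sign fixed, the fallback is a valid proof that avoids \cite{KD24}. The commutation $P_{[q]}K=KP_{[q]}$ that it needs does hold: for the intrinsic characteristic function, the S-functional integral reduces to one involving only $Q_s(A)^{-1}$, $A$ and real scalars.
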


\begin{proof}
Fix $q\in\HH$ and set $B:=\Qq{A}$. As in the proof of Lemma~\ref{lem:fact}, the operator
\[
K':=2AK+K^2-2\operatorname{Re}(q)K
\]
is compact and commutes with $A$; since $B=Q_q(A)$ is a real polynomial in $A$, it follows that $K'$ commutes with $B$ as well. Identity~\eqref{eq:pert} of Remark~\ref{rem:pert} reads $\Qq{A+K}=B+K'$.

Since $K'$ is compact, (R2) gives $B\in\F(\V)\iff B+K'\in\F(\V)$: apply the stability of the Fredholm property under compact perturbation once to the pair $(B,K')$, and once to the pair $(B+K',-K')$ to recover $B=(B+K')+(-K')$.

Moreover, since $K'$ commutes with $B$, \cite[Thm~3.9]{KD24} gives
\[
\asc(B)<\infty \iff \asc(B+K')<\infty,\qquad \dsc(B)<\infty \iff \dsc(B+K')<\infty.
\]
Indeed, the forward implications follow directly from \cite[Thm~3.9]{KD24}; the reverse implications follow by applying the same theorem to the pair $(B+K',-K')$, noting that $-K'$ is compact and commutes with $B+K'$.

Combining the two equivalences, $B\in\Br(\V)\iff B+K'\in\Br(\V)$, that is, $\Qq{A}\in\Br(\V)\iff\Qq{A+K}\in\Br(\V)$. As $q\in\HH$ was arbitrary, $q\in\sigb(A)\iff q\in\sigb(A+K)$, whence $\sigb(A+K)=\sigb(A)$.
\end{proof}

\begin{corollary}[finite-rank refinement]\label{cor:finite}
For every $A\in\B(\V)$,
\[
\sigb(A)=\bigcap_{K\in\CA\cap\Boo(\V)}\sS(A+K).
\]
\end{corollary}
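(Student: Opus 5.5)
The plan is to squeeze the finite-rank intersection between $\sigb(A)$ and the full intersection of Theorem~\ref{thm:main}, and then check that the witnesses used there can be taken of finite rank.

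\emph{First inclusion.} Since $\CA\cap\Boo(\V)\subseteq\CA$, intersecting over a smaller index set gives a larger set:
\[
\bigcap_{K\in\CA}\sS(A+K)\subseteq\bigcap_{K\in\CA\cap\Boo(\V)}\sS(A+K).
\]
By Theorem~\ref{thm:main}, the left-hand side is $\sigb(A)$. One could also argue directly: every finite-rank operator is compact by (R2), so Corollary~\ref{cor:stab} gives $\sigb(A)\subseteq\sS(A+K)$ for each $K\in\CA\cap\Boo(\V)$.

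\emph{Reverse inclusion.} I will repeat the witness argument from the proof of Theorem~\ref{thm:main}, checking that each witness is finite rank. Let $q\notin\sigb(A)$. By Lemma~\ref{lem:A}, either $q\in\rS(A)$ or $q\in\sd(A)$.
\begin{itemize}
\item If $q\in\rS(A)$, take $K=0$. It has finite rank, commutes with $A$, and $q\notin\sS(A)=\sS(A+0)$.
\item If $q\in\sd(A)$, take $K=P_{[q]}$. By the very definition of $\sd(A)$ this projector has finite rank, so $K\in\Boo(\V)$. It commutes with $A$ by (R5). By Lemma~\ref{lem:proj}, $q\notin\sS(A+P_{[q]})$.
\end{itemize}
In both cases $q$ lies outside the finite-rank intersection, which proves the reverse inclusion.

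There is no real obstacle here. The only point to check is that the Riesz projector used in Lemma~\ref{lem:proj} is finite rank and not merely compact, and this is built into the definition of $\sd(A)$.
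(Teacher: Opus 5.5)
Your proof is correct and follows the paper's argument essentially verbatim. The first inclusion comes from monotonicity of the intersection together with Theorem~\ref{thm:main}. For the reverse inclusion, Lemma~\ref{lem:A} shows the only witnesses needed are $K=0$ and the Riesz projector $P_{[q]}$, which has finite rank by definition of $\sd(A)$ and commutes with $A$ by (R5). Your alternative route to the first inclusion, via (R2) and Corollary~\ref{cor:stab}, is also valid.
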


\begin{proof}
Since $\Boo(\V)\subset\Bo(\V)$ by (R2), $\CA\cap\Boo(\V)\subseteq\CA$; intersecting over the smaller index set $\CA\cap\Boo(\V)$ can only enlarge the intersection, so Theorem~\ref{thm:main} gives
\[
\sigb(A)=\bigcap_{K\in\CA}\sS(A+K)\ \subseteq\ \bigcap_{K\in\CA\cap\Boo(\V)}\sS(A+K).
\]

Conversely, let $q\notin\sigb(A)$. By Lemma~\ref{lem:A}, either $q\in\rS(A)$, in which case $K=0\in\CA\cap\Boo(\V)$ satisfies $q\notin\sS(A+K)$; or $q\in\sd(A)$, in which case the Riesz projector $K=P_{[q]}$ has finite rank by definition of $\sd(A)$ and lies in $\CA\cap\Boo(\V)$ by (R5), and Lemma~\ref{lem:proj} gives $q\notin\sS(A+K)$. In either case there is $K\in\CA\cap\Boo(\V)$ with $q\notin\sS(A+K)$, so $q\notin\bigcap_{K\in\CA\cap\Boo(\V)}\sS(A+K)$.
\end{proof}

\begin{remark}[nature of the two statements]\label{rem:corollaries}
Theorem~\ref{thm:pointwise} and Corollary~\ref{cor:finite} are of a different nature. Corollary~\ref{cor:finite} costs nothing beyond Theorem~\ref{thm:main}: its proof merely revisits the argument already given, observing that the only perturbations ever invoked there ($K=0$ and the Riesz projector $K=P_{[q]}$) are of finite rank. Theorem~\ref{thm:pointwise}, by contrast, is logically independent of both Theorem~\ref{thm:main} and the factorization lemma: it rests instead on the commuting ascent/descent stability theorem of \cite{KD24}, applied pointwise to the pseudo-resolvent at each $q\in\HH$, with the equivalence obtained by the symmetry argument noted in the proof. In exchange for this additional input it yields more than Corollary~\ref{cor:stab}, namely equality in place of the single inclusion $\sigb(A)\subseteq\sS(A+K)$.
\end{remark}

\section{Conclusion}\label{sec:conclusion}
 
The intersection formula $\sigb(A)=\bigcap_{K\in\CA}\sS(A+K)$ is established (Theorem~\ref{thm:main}), resting on the factorization lemma (Lemma~\ref{lem:fact}) and on the characterization $\sigb(A)=\sS(A)\setminus\sd(A)$, for which an independent proof is given (Lemma~\ref{lem:A}). Except for the quaternionic inputs (R5), (R7) and (R8), the argument uses only real polynomial algebra, a compactness/orthogonality argument, and Fredholm theory. The finite-rank refinement of the intersection (Corollary~\ref{cor:finite}) is also obtained, together with, by an independent argument, the stronger pointwise statement $\sigb(A+K)=\sigb(A)$ for fixed $K\in\CA$ (Theorem~\ref{thm:pointwise}).

\end{document}